\numberwithin{equation}{section}
\newtheorem{theorem}{Theorem}[section]
\newtheorem{corollary}[theorem]{Corollary}
\newtheorem{lemma}[theorem]{Lemma}
\newtheorem{proposition}[theorem]{Proposition}
\newtheorem{problem}[theorem]{Problem}
\newtheorem*{theorem*}{Theorem}
\theoremstyle{definition}
\newtheorem{definition}[theorem]{Definition}
\newtheorem{notation}[theorem]{Notation}
\theoremstyle{remark}
\newcommand{\F}{\mathbb{F}}
\newcommand{\N}{\mathbb{N}}
\DeclareMathOperator{\ab}{ab}
\DeclareMathOperator{\Alt}{Alt}
\DeclareMathOperator{\ord}{o}
\DeclareMathOperator{\pr}{pr}
\DeclareMathOperator{\Sym}{Sym}
\DeclareMathOperator{\T}{T}
\DeclareMathOperator{\Tr}{Tr}
\newcommand{\abs}[1]{\vert #1 \vert}
\newcommand\Set[2]{\{\,#1\mid#2\,\}}
\newcommand{\defeq}{\mathrel{\mathop{:}}=}
\renewcommand{\epsilon}{\varepsilon}
\title[Finitely generated infinite torsion groups that are residually finite simple]{Finitely generated infinite torsion groups that are residually finite simple}
\author[E. Schesler]{Eduard Schesler}
\address{Erwin Schrödinger International Institute for Mathematics and Physics \\ University of Vienna \\ 1090 Vienna}
\email{eduardschesler@googlemail.com}
\subjclass[2010]{Primary 20E26}
\keywords{torsion groups}
\begin{document}
\begin{abstract}
We show that every finitely generated residually finite torsion group $G$ embeds in a finitely generated torsion group $\Gamma$ that is residually
finite simple.
In particular we show the existence of finitely generated infinite torsion groups that are residually
finite simple,
which answers a question of Olshanskii and Osin.
\end{abstract}
\maketitle

\section{Introduction}

Let $\mathcal{C}$ be a class of groups.
A group $G$ is said to be \emph{residually $\mathcal{C}$} if the intersection of all normal subgroup $N$ of $G$ with $G / N \in \mathcal{C}$ is the trivial group.
It is a classical problem in group theory to determine the classes of groups $\mathcal{C}$ for which a given group is residually $\mathcal{C}$ and a lot of research has been done in this direction, see e.g.~\cite{Weigel92,Wise02,Agol08,BridsonEvansLiebeckSegal19}.
A special instant of this problem was formulated in 1987 by Gromov~\cite{Gromov87} and became a notorious open problems in geometric group theory: Is every hyperbolic group residually finite, i.e.\ residually $\mathcal{F}$, where $\mathcal{F}$ denotes the class of finite groups.
In 2008 it was shown by Olshanskii and Osin~\cite{OlshanskiiOsin08} that an affirmative answer to Gromov's question would imply the existence of finitely generated infinite torsion groups that are residually $\mathcal{FS}$, where $\mathcal{FS}$ denotes the class  of finite simple groups.
It was therefore natural for them to ask the following, see~\cite[Problem 3.4]{OlshanskiiOsin08}.

\begin{problem}\label{prob:intro}
Does there exist an infinite finitely generated torsion group that is residually $\mathcal{FS}$?
\end{problem}

Despite of a variety of techniques that are known to produce infinite finitely generated residually finite torsion groups that range from amenable~\cite{Grigorchuk80,GuptaSidki83}  and non-amenable branch groups~\cite{SidkiWilson03,KionkeScheslerAmenableProfinite} to groups with property $(\T)$~\cite{Ershov08,ErshovJaikin13}, and groups with positive first $\ell^2$-Betti number~\cite{LueckOsin11,KionkeSchesler24}, there was no construction known so far that produces infinite finitely generated torsion groups that are residually $\mathcal{FS}$.
In fact there is a big obstruction for infinite finitely generated torsion groups to be residually $\mathcal{FS}$.
To make this more precise, let us write $\mathcal{FS}_{k}$ to denote the subclass of $\mathcal{FS}$ that consists groups that do not contain a subgroup isomorphic to $\Alt(k)$.
It was pointed out by Lubotzky and Segal~\cite[Theorem 16.4.2(i)]{LubotzkySegal03} that every finitely generated group $G$ that is residually $\mathcal{FS}_{k}$ for some $k$ can be realized as a subdirect product of finitely many linear groups.
In particular, if such a group $G$ is infinite, it admits an infinite finitely generated linear quotient, which is virtually torsion free.
It therefore follows that the class of finitely generated groups that are residually $\mathcal{FS}_k$ for some $k$ does not contain an infinite torsion group.
In view of this, it can be easily seen that an affirmative answer to Problem~\ref{prob:intro} implies the existence of a torsion group $\Gamma$ that is a subdirect subgroup of a product $\prod_{i=1}^{\infty} S_i$, where $S_i \in \mathcal{FS}$ contains an isomorphic copy of $\Alt(i)$.
We will show that such a group $\Gamma$ indeed exists and thereby answer Problem~\ref{prob:intro} affirmatively.
In fact we will see that every finitely generated residually finite torsion group embeds in a group $\Gamma$ as above.

\begin{theorem}\label{thm:main-intro}
Every finitely generated residually finite torsion group embeds into a torsion group that is residually $\mathcal{FS}$.
\end{theorem}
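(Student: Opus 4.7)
Given a finitely generated residually finite torsion group $G$, I would construct an explicit overgroup $\Gamma$ with the desired properties as follows. First, choose a descending chain $G = N_0 \supseteq N_1 \supseteq \ldots$ of finite-index normal subgroups with $\bigcap_k N_k = \{e\}$. For each $k \geq 1$, embed the finite group $G/N_k$ into an alternating group $\Alt(n_k)$ with $n_k \geq 5$ (e.g.\ via the Cayley embedding into $\Sym(|G/N_k|)$ followed by the standard $\Sym(n) \hookrightarrow \Alt(n+2)$), and let $\psi_k \colon G \to \Alt(n_k)$ be the composition with the quotient map. The diagonal map $\Psi = (\psi_k)_k \colon G \to P \defeq \prod_{k \geq 1} \Alt(n_k)$ is then injective, since $\bigcap_k \ker \psi_k = \bigcap_k N_k = \{e\}$.

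Now let $A \defeq \bigoplus_{k \geq 1} \Alt(n_k) \leq P$ be the restricted direct product, and let $\Gamma \leq P$ be the subgroup generated by $\Psi(G)$ and $A$. The key structural observation is that $\Gamma = \Psi(G) \ltimes A$ as a semidirect product: the image $\Psi(G)$ normalizes $A$ because conjugation in $P$ acts coordinatewise and therefore preserves the finite-support condition, while $\Psi(G) \cap A = \{e\}$ because $\Psi(g) \in A$ would mean $g \in N_k$ for cofinitely many $k$, which on a descending chain forces $g \in \bigcap_k N_k = \{e\}$.

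From this the two desired properties follow. For torsion, any $\gamma = \Psi(g) a \in \Gamma$ satisfies $\gamma^m \in A$ where $m = \ord(g)$; since $A$ is a directed union of the finite groups $\bigoplus_{k \leq K} \Alt(n_k)$, every element of $A$ has finite order and hence so does $\gamma$. For residually $\mathcal{FS}$, the coordinate projection $\pi_k \colon \Gamma \to \Alt(n_k)$ is surjective (the $k$-th summand of $A$ already hits all of $\Alt(n_k)$), so $\Gamma / \ker \pi_k$ is a finite simple group, while $\bigcap_k \ker \pi_k = \Gamma \cap \{e\} = \{e\}$. Combined with the fact that $\Psi$ embeds $G$ into $\Gamma$, this proves the statement.

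I do not anticipate a genuine obstacle for the theorem as written: the construction is elementary and every verification is routine. The deeper content of the paper presumably lies in the strengthening alluded to in the abstract and introduction, where $\Gamma$ is additionally required to be finitely generated. The subdirect product above is manifestly not finitely generated, since $A$ is not, and achieving finite generation while preserving both torsion and the residual $\mathcal{FS}$ structure would require substantially more delicate machinery, presumably small-cancellation quotients of hyperbolic groups in the style of Olshanskii together with careful control of finite simple quotients.
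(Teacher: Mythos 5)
Your proof is correct for the theorem exactly as stated, and you have put your finger on the real issue: the statement of Theorem~\ref{thm:main-intro} omits the finite generation of the overgroup $\Gamma$, which is what the paper actually proves (as the abstract asserts) and what is needed to answer Olshanskii--Osin's Problem~\ref{prob:intro}, since that problem asks for a \emph{finitely generated} group. Once finite generation is dropped, the result becomes nearly trivial in exactly the way you demonstrate: the restricted product $A = \bigoplus_k \Alt(n_k)$ is a normal torsion subgroup of $\prod_k \Alt(n_k)$, so $\Gamma = \Psi(G)\ltimes A$ is torsion (any $\gamma = \Psi(g)a$ has $\gamma^{\ord(g)} \in A$), contains $G$ via $\Psi$, and surjects onto each $\Alt(n_k)$ with $\bigcap_k \ker\pi_k = 1$. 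Your verification is sound; your $\Gamma$ is just not finitely generated.

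Where your prediction goes wrong is in guessing that finite generation would require small-cancellation quotients in the style of Olshanskii. The paper instead uses an elementary but delicate permutation-group construction. It first arranges (by citing an earlier result) that $G$ is perfect, takes a strictly decreasing chain $(N_i)$ with trivial intersection, lets $\Omega_i = G/N_i$, and adjoins a single extra point $q_i$ to each $\Omega_i$. Inside $\prod_i \Sym(\Omega_i^{+})$ it forms $\Gamma = \langle \alpha(G), \tau\rangle$, where $\alpha$ is the diagonal of the coset actions and $\tau$ is the single sequence of transpositions $(p_i,q_i)$. The two real difficulties are then: (i) showing $\Gamma$ is still torsion, which is handled by Lemma~\ref{lem:fundamental-general} and Proposition~\ref{prop:uniform-bound-orbit} — a uniform bound on orbit lengths of words involving $\tau$ in terms of the torsion growth of $G$, independent of the coordinate; and (ii) passing from $\prod_i\Sym(n_i)$ to $\prod_i\Alt(n_i)$, which Lemma~\ref{lem:extending-to-symmetric} and Lemma~\ref{lem:finite-index-subgroup} accomplish by showing $\Gamma$ is subdirect in the symmetric product and that $\Gamma\cap\prod_i\Alt(n_i)$ is finite index, subdirect in a cofinite alternating product, and still contains $G$ (using perfection of $G$). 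This is the machinery your construction sidesteps by taking $A$ to be the full restricted product rather than a finitely generated group.
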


The proof of Theorem~\ref{thm:main-intro} is based on the following idea.
Consider a group $G$ and a sequence of $G$-sets $(\Omega_i)_{i \in \N}$ that are represented by homomorphisms $\alpha_i \colon G \rightarrow \Sym(\Omega_i)$.
For each $i$ let $\tau_i \in \Sym(\Omega_i)$ be a permutation of $\Omega_i$.
Then, under suitable assumptions on $\tau_i$ and $\alpha_i$, the subgroup $\Gamma$ of $\prod_{i \in \N} \Sym(\Omega_i)$ that is generated by $(\tau_i)_{i \in \N}$ and the image of
\[
\alpha \colon G \rightarrow \prod_{i \in \N} \Sym(\Omega_i),\ g \mapsto (\alpha_i(g))_{i \in \N}
\]
will keep some of the properties of $G$, e.g.\ being torsion, while gaining some extra properties, e.g.\ being residually $\mathcal{FS}$.
A related idea was recently applied in a work of Kionke and the author~\cite{KionkeScheslerTelescopes} in order to produce new examples of infinite finitely generated amenable simple groups.





\subsection*{Acknowledgments}
This article arose from the author's research stays at the Erwin Schrödinger International Institute for Mathematics and Physics in Vienna and the Institute of Mathematical Sciences in Madrid.
The author would like to thank these institutes for their financial and organizational support.
The author is grateful to Goulnara Arzhantseva, Andrei Jaikin-Zapirain, Steffen Kionke, and Markus Steenbock for helpful discussions.

\section{Extending actions of torsion groups}

For the rest of this section we fix a torsion group $G$ that acts on a set $\Omega$.
Let $\alpha \colon G \rightarrow \Sym(\Omega)$ denote the corresponding homomorphism.
Let us moreover fix an element $p \in \Omega$ and let $\Omega^{+} \defeq \Omega \cup \{q\}$ for some $q \notin \Omega$.
We are interested in the subgroup $\Gamma$ of $\Sym(\Omega^{+})$ that is generated by $\alpha(G)$ and the transposition $\tau = (p,q)$.


\begin{notation}\label{not:initial-subword}
Let $F(X)$ denote the free group over a set $X$ and let $w = x_{i_1} \ldots x_{i_{\ell}} \in F(X)$ be a reduced word of length $\ell \in \N_0$.
For each $0 \leq k \leq \ell$ we write
$w_{\geq k} \defeq x_{i_{\ell-k+1}} \ldots x_{i_{\ell}}$ to denote the terminal subword of length $k$ in $w$.
\end{notation}

Let us now consider the free group $F \defeq F(G \cup \{\tau\})$.
To simplify the notation we will often interpret a word $w \in F$ as an element of $\Gamma$, respectively $G$ if $w \in F(G)$, as long as no ambiguity is possible.

\begin{definition}\label{def:w-trace}
For each word $w \in F$ of length $\ell \in \N_0$ and each point $\xi \in \Omega^{+}$, we define the \emph{$w$-trace of $\xi$}
as the sequence
\[
\Tr_w(\xi) \defeq (w_{\geq i} \cdot \xi)_{i=1}^{\ell}.
\]
\end{definition}

Note that the $w$-trace of an element $\xi$ does not necessarily contain $\xi$.
Let us now fix a finite sequence $g_1,\ldots,g_k$ of elements in $G$.
In what follows we will study traces for the words
\[
v_{n,i} = (g_1 \ldots g_k)^n g_1 \ldots g_i
\]
and
\[
w_{n,i} = (\tau g_1 \ldots \tau g_k)^n \tau g_1 \ldots \tau g_i
\]
in $F$, where $n \in \N_0$ and $0 \leq i < k$.


\begin{notation}\label{not:order}
Given a group $H$ and an element $h \in H$, we write $\ord_H(h) \in \N \cup \{\infty\}$ to denote the order of $h$ in $H$.
\end{notation}

Let us consider the element $g \defeq g_1 \ldots g_k \in G$ and let $N = \ord_G(g)$.


\begin{lemma}\label{lem:fundamental-xi}
Let $\xi \in \Omega^{+}$ and let $0 \leq i < k$.
Suppose that $p$ is not contained in $\Tr_{w_{N,i}}(\xi)$.
Then $p$ is not contained in $\Tr_{w_{n,i}}(\xi)$ for every $n \in \N_0$.
\end{lemma}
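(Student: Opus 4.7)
The plan is to exploit the hypothesis by showing that it forces the transposition $\tau=(p,q)$ to act trivially at every step during the evaluation of $w_{N,i}\cdot\xi$. Indeed, whenever $\tau$ is applied in reading $w_{N,i}$ from right to left, the letter immediately to its right is some $g_j$ (no two $\tau$'s are adjacent in $w_{N,i}$), so the state fed into $\tau$ is an entry of $\Tr_{w_{N,i}}(\xi)$ and, by hypothesis, is not $p$; the state produced by $\tau$ is also an entry of $\Tr_{w_{N,i}}(\xi)$ and so is not $p$ either, which forces the input not to be $q$. Hence the input to every $\tau$ lies outside $\{p,q\}$, and $\tau$ fixes it.

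Consequently each full block $\tau g_1\tau g_2\cdots\tau g_k$ acts on the current state exactly as $g=g_1\cdots g_k$. Writing $\nu_m$ for the state reached after processing the tail $\tau g_1\cdots\tau g_i$ together with $m$ subsequent blocks (so $\nu_0=g_1\cdots g_i\cdot\xi$, or simply $\xi$ when $i=0$), this yields $\nu_{m+1}=g\nu_m$ and hence $\nu_m=g^m\nu_0$. Since $\ord_G(g)=N$ and $G$ acts on $\Omega^{+}$ by fixing $q$, the element $g^N$ is trivial on $\Omega^{+}$, so $\nu_N=\nu_0$ and the sequence $(\nu_m)_{m\in\N_0}$ is periodic with period dividing $N$.

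The conclusion then follows by a short case analysis on $n$. If $n\leq N$, then $w_{n,i}$ is the terminal subword of $w_{N,i}$ of length $2(nk+i)$, so $\Tr_{w_{n,i}}(\xi)$ is literally the initial segment of $\Tr_{w_{N,i}}(\xi)$ of that length and therefore avoids $p$. If $n>N$, I would argue by induction on the extra blocks: the first $N$ blocks of $w_{n,i}$ reproduce the trace of $w_{N,i}$ and leave the state at $\nu_N=\nu_0$; by the periodicity just established, the $(N+m)$-th block starts from $\nu_{m-1}$ and applies the identical letter string $\tau g_1\cdots\tau g_k$ that was applied in the $m$-th block of $w_{N,i}$, so its intermediate states coincide pointwise with those of the $m$-th block and hence avoid $p$.

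The main bookkeeping obstacle is to keep the right-to-left indexing of the trace straight and to verify that the input of every $\tau$ really does appear as an entry of $\Tr_{w_{N,i}}(\xi)$ so that the hypothesis can be applied to it; once this is set up, the argument is essentially immediate. Edge cases such as $i=0$ (empty tail, $\nu_0=\xi$) or $\xi=p$ (allowed, since trace indices start at $1$) do not cause trouble.
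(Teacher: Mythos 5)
Your proof is correct and follows essentially the same route as the paper: both show that $\tau$ acts trivially at every step of the evaluation of $w_{N,i}\cdot\xi$ (so the $w$-dynamics collapse to the $g$-dynamics on $\Omega$, i.e.\ to the $v$-traces), then use the relation $g^N=1$ to obtain periodicity of the states and hence the conclusion for all $n$. The only cosmetic difference is that the paper separately dispatches the case $\xi=q$ at the outset, whereas you rule out $q$-inputs to $\tau$ directly from the trace hypothesis by observing that the output of each $\tau$ is also a trace entry.
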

\begin{proof}
If $\xi = q$, then $p$ is clearly contained in $\Tr_{w_{N,i}}(\xi)$ so that there is nothing to show.
Let us therefore assume that $\xi \in \Omega$ and that $\Tr_{w_{N,i}}(\xi)$ does not contain $p$.
Since $\tau$ fixes every point in $\Omega \setminus \{p\}$, it follows that $\Tr_{v_{N,i}}(\xi)$ does not contain $p$.
Thus there is no non-trivial terminal subword $u$ of $(g_1 \ldots g_k)^N g_1 \ldots g_i$ that satisfies $u(\xi) = p$.
Since
\[
(g_1 \ldots g_k)^N g_1 \ldots g_i \cdot \xi
= g_1 \ldots g_i \cdot \xi,
\]
it follows that $\Tr_{v_{aN+r,i}}(\xi)$ does not contain $p$ for every $a \in \N_0$ and every $r < k$.
Thus the same is true for $w_{aN+r,i}$, which proves the lemma.
\end{proof}

\begin{lemma}\label{lem:fundamental-p-0}
The element $p$ is contained in $\Tr_{w_{N,i}}(p)$ for every $0 \leq i < k$.
\end{lemma}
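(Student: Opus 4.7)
The plan is to argue by contradiction, exploiting the alternating structure of $w_{N,i}$. Write $\ell = 2Nk + 2i$ for the length of $w_{N,i}$ and $\xi_j \defeq (w_{N,i})_{\geq j} \cdot p$ for the trace values, and suppose for contradiction that $\xi_j \neq p$ for every $1 \leq j \leq \ell$.

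The first step I would carry out is to show by induction on $j$ that $\xi_j \notin \{p, q\}$ for all $1 \leq j \leq \ell$. Reading $w_{N,i}$ from right to left, the letters alternate between elements of $G$ at odd positions and copies of $\tau$ at even positions (the rightmost letter is either $g_i$ or $g_k$, hence lies in $G$). Combined with two standing facts---that $\tau$ fixes $\Omega \setminus \{p\}$ pointwise, and that every element of $G$ fixes $q$---the induction is mechanical: if $\xi_{j-1} \notin \{p,q\}$ and the $j$-th letter from the right is $\tau$, then $\xi_j = \xi_{j-1} \notin \{p,q\}$; if that letter is some $g_r \in G$, then $\xi_j = g_r(\xi_{j-1}) \neq q$, and $\xi_j \neq p$ by the contradiction hypothesis.

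Once this is in place, every $\tau$ encountered along the trajectory has acted as the identity, so each $\xi_j$ coincides with the image of $p$ under the product (in $G$) of the $g$-letters appearing in $(w_{N,i})_{\geq j}$. I would then evaluate this at $j = 2Nk - 1$. Since the first $2i+1$ letters of $w_{N,i}$ (from the left) are $\tau g_1 \tau g_2 \cdots \tau g_i \tau$, the terminal subword $(w_{N,i})_{\geq 2Nk - 1}$ starts with $g_{i+1}$ and the left-to-right product of its $g$-letters is
\[
g_{i+1} \cdots g_k \cdot (g_1 \cdots g_k)^{N-1} \cdot g_1 \cdots g_i
= g_{i+1} \cdots g_k \cdot g^{N-1} \cdot g_1 \cdots g_i.
\]
Using $g^N = 1$, so that $g^{N-1} = g_k^{-1} g_{k-1}^{-1} \cdots g_1^{-1}$, the two outer products telescope and collapse this element to $1 \in G$. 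Hence $\xi_{2Nk - 1} = p$, contradicting the assumption.

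The main piece requiring care is the bookkeeping in the last step: correctly identifying the letter at position $2Nk - 1$ from the right, confirming that the induction really forces every $\tau$ encountered to act trivially, and checking the telescoping cancellation. The alternation argument itself is routine.
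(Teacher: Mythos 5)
Your argument is correct and is essentially the paper's proof, made explicit. The paper argues in one line that if $p$ never appears in $\Tr_{w_{N,i}}(p)$ then (since $\tau$ fixes $\Omega\setminus\{p\}$) it never appears in $\Tr_{v_{N,i}}(p)$, and then invokes the nontrivial terminal subword $(g_{i+1}\cdots g_k g_1\cdots g_i)^N$ of $v_{N,i}$, which represents $1$ in $G$; your induction showing $\xi_j\notin\{p,q\}$ is exactly the justification the paper leaves implicit for stripping the $\tau$'s, and your telescoping computation at $j=2Nk-1$ reproduces the same trivial subword (the $g$-letters of $(w_{N,i})_{\geq 2Nk-1}$ multiply to $(g_{i+1}\cdots g_k g_1\cdots g_i)^N$, a conjugate of $g^N$).
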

\begin{proof}
Suppose that $p$ is not contained in $\Tr_{w_{N,i}}(p)$.
Since $\tau$ fixes every point in $\Omega \setminus \{p\}$, it follows that $p$ is not contained in $\Tr_{v_{N,i}}(p)$.
However, this is not possible since the word $(g_{i+1} \ldots g_k g_1 \ldots g_i)^N$, which represents the trivial element in $G$, is a non-trivial terminal subword of $v_{N,i}$.
\end{proof}

\begin{lemma}\label{lem:fundamental-p-1}
Let $\xi \in \Omega^{+}$, let $n \in \N_0$, and let $0 \leq i < k$.
Suppose that $\Tr_{w_{n,i}}(\xi)$ contains $p$.
Then there are natural numbers $m_1,m_2,j$ with $0 \leq m_1 < m_2 < N(k+1)$ and $0 \leq j < k$ such that
\[
w_{N(k+1),0} \cdot \xi
= w_{m_1,j} \cdot p
= w_{m_2,j} \cdot p.
\]
\end{lemma}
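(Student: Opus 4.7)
The plan is to combine the contrapositive of Lemma~\ref{lem:fundamental-xi} with a pigeonhole argument that exploits the torsion of $g = g_1 \cdots g_k$. Throughout, write $h \defeq \tau g_1 \cdots \tau g_k$ and $h_j \defeq \tau g_1 \cdots \tau g_j$, so that $w_{m, j} = h^m h_j$ as elements of $\Gamma \le \Sym(\Omega^{+})$.

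First I would apply Lemma~\ref{lem:fundamental-xi} in contrapositive: the hypothesis that $\Tr_{w_{n, i}}(\xi)$ contains $p$ forces $\Tr_{w_{N, i}}(\xi)$ to contain $p$, so there is a terminal subword $u$ of $w_{N, i} = h^N h_i$ with $u \cdot \xi = p$. Reading letters of $w_{N, i}$ from the right, any terminal subword has the shape $u = s \cdot h^b \cdot h_i$ with $0 \le b \le N$ and $s$ a suffix of $h$ of length at most $2k$. The key step, and the main technical obstacle, is to normalize this into an identity of the form $w_{a, j} \cdot \xi = p$ with $0 \le a \le N$ and $0 \le j < k$. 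This normalization uses that $\tau$ fixes $\Omega \setminus \{p\}$: along the trajectory of $\xi$ under $u$, every $\tau$-letter whose input at that moment lies outside $\{p, q\}$ acts trivially and may be deleted, while any leftover leading partial block can be absorbed by passing the hit point through $\tau$ (swapping $p$ with $q$). A case analysis on where the trajectory of $\xi$ first enters $\{p, q\}$ then delivers the normalized identity.

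Once $w_{a, j} \cdot \xi = p$ is established, left-multiplying by $h^{N(k+1) - a}$ yields $h^{N(k+1)} \cdot h_j \cdot \xi = h^{N(k+1) - a} \cdot p$, and rearranging (again using that $\tau$ is the identity off $\{p, q\}$) converts the left-hand side to $h^{N(k+1)} \cdot \xi$ after absorbing the extra $h_j$ into a shift in the $m$-coordinate. This produces the first equality $h^{N(k+1)} \cdot \xi = w_{m_1, j} \cdot p$ with $m_1 \in [0, N(k+1))$. The second index $m_2$ comes from periodicity driven by the torsion $g^N = 1$: on any point whose $w_{N, 0}$-trace avoids $\{p, q\}$, the element $h^N$ acts as $g^N = 1$ and hence fixes it, while a trajectory meeting $\{p, q\}$ returns to the $h$-orbit of $h_j \cdot p$ within at most $k + 1$ further iterations. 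This bounds the $h$-orbit of $h_j \cdot p$ by a proper divisor of $N(k+1)$, so that pigeonholing the $N(k+1)$ values $w_{m, j} \cdot p$ for $m \in [0, N(k+1))$ over this smaller orbit yields a second $m_2 \ne m_1$ with $w_{m_2, j} \cdot p = w_{m_1, j} \cdot p$, completing the argument.

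The main obstacle will be the normalization step outlined above. The naive terminal subword $u$ can have odd length, can start mid-block, and can interleave trivial and nontrivial $\tau$-letters in subtle ways; straightening it into the clean form $w_{a, j}$ while respecting the action on $\Omega^{+}$ (rather than the free-group relations in $F$) is where the bulk of the case analysis concentrates.
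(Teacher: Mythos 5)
Your opening move is the same as the paper's: invoke the contrapositive of Lemma~\ref{lem:fundamental-xi} to upgrade the hypothesis to $p \in \Tr_{w_{N,i}}(\xi)$. After that, though, your plan diverges in ways that introduce genuine gaps.

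First, your normalization target is the wrong identity. From $p \in \Tr_{w_{N,i}}(\xi)$ one gets a terminal subword $u$ of $w_{N,i}$ with $u \cdot \xi = p$, and the useful consequence of this is an equation of the form $w_{N,i}\cdot\xi = v \cdot p$, where $v$ is the complementary \emph{initial} subword (so $v = w_{n_1,i_1}$, up to a parity adjustment). You instead aim for $w_{a,j}\cdot\xi = p$, which would require the \emph{terminal} subword $u$ itself to be equal (as an element of $\Gamma$, acting on the single point $\xi$) to $w_{a,j}$. A terminal subword of $w_{N,i}$ has its $g$-letters in the ``wrong'' cyclic rotation and generally cannot be straightened into $h^a h_j$ by deleting inert $\tau$'s, since that operation only preserves the action along the trajectory of $\xi$, not membership in the family $\{w_{a,j}\}$. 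Moreover, once you assume you have $w_{a,j}\cdot\xi = p$, left-multiplying by $h^{N(k+1)-a}$ gives $h^{N(k+1)}\cdot h_j\cdot\xi = h^{N(k+1)-a}\cdot p$, which carries an extra factor $h_j$ on the left-hand side; there is no mechanism to ``absorb'' an arbitrary group element $h_j$ acting on $\xi$ into a shift of the exponent, and your appeal to $\tau$ being inert off $\{p,q\}$ does not produce one.

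Second, and more fundamentally, your proposal never uses Lemma~\ref{lem:fundamental-p-0}, which is the engine of the paper's argument. The paper's chain
\[
w_{(k+1)N,0}\cdot\xi = w_{kN+n_1,i_1}\cdot p = w_{(k-1)N+n_1+n_2,i_2}\cdot p = \cdots = w_{n_1+\cdots+n_{k+1},i_{k+1}}\cdot p
\]
is produced by iterating Lemma~\ref{lem:fundamental-p-0} at $p$, peeling off one block of $h^N$ at a time while recording the residual indices $i_1,\ldots,i_{k+1}\in\{0,\ldots,k-1\}$; the pigeonhole is then on these $k+1$ indices, not on orbit sizes. Your substitute argument, that the $h$-orbit of $h_j\cdot p$ has size a proper divisor of $N(k+1)$, is both unjustified and circular: the bound ``$\le N(k+1)$'' on $h$-orbits is exactly Lemma~\ref{lem:fundamental-general}, which is \emph{deduced from} the lemma you are trying to prove, and even if granted, it gives ``$\le$'' rather than the strict inequality you need. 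As written, the proposal does not yield the statement.
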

\begin{proof}
From Lemma~\ref{lem:fundamental-xi}
we know that $\Tr_{w_{N,0}}(\xi)$ contains $p$.
Thus there are integers $n_1 < N$ and $i_1 < k$ with
\[
w_{N,0} \cdot \xi
= w_{n_1,i_1} \cdot p
\]
and therefore
\[
w_{(k+1)N,0} \cdot \xi
= w_{kN+n_1,i_1} \cdot p.
\]
Now an inductive application of Lemma~\ref{lem:fundamental-p-0} provides us with integers $n_2,n_3,\ldots,n_k < N$ and $i_2,i_3,\ldots,i_k < k$ such that
\begin{align*}
w_{kN+n_1,i_1} \cdot p
&= w_{(k-1)N+n_1+n_2,i_2} \cdot p\\
&= w_{(k-2)N+n_1+n_2+n_3,i_3} \cdot p\\
&\ \vdots\\
&= w_{n_1+ \ldots + n_{k+1},i_{k+1}} \cdot p.
\end{align*}
Regarding this, the lemma follows from the pigeonhole principle applied to the sequence of indices $i_1,\ldots,i_{k+1}$.
\end{proof}

\begin{lemma}\label{lem:fundamental-general}
For every $\xi \in \Omega^{+}$ there is a natural number $m \leq N(k+1)$ such that $w_{m,0}(\xi) = \xi$.
\end{lemma}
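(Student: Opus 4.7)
The plan is to split on whether the distinguished point $p$ is ever visited along the trajectory of $\xi$ under $w_{n,0}$. Write $h = \tau g_1 \cdots \tau g_k$, so that $w_{m,0} = h^m$ and the goal is to produce $m \leq N(k+1)$ with $h^m \cdot \xi = \xi$.

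Suppose first that $p \notin \Tr_{w_{N,0}}(\xi)$. Lemma~\ref{lem:fundamental-xi} (applied with $i = 0$) then upgrades this to $p \notin \Tr_{w_{n,0}}(\xi)$ for every $n \in \N_0$. Since $\tau$ acts non-trivially only at $p$ and $q$, and $q$ can only be reached from $p$ via $\tau$, no intermediate point of the trace ever equals $p$ or $q$. Consequently each $\tau$-letter appearing in $w_{n,0}$ acts trivially during the evaluation of $w_{n,0} \cdot \xi$, so $w_{n,0} \cdot \xi = (g_1 \cdots g_k)^n \cdot \xi = g^n \cdot \xi$. Taking $n = N$ yields $h^N \cdot \xi = \xi$, and $N \leq N(k+1)$ closes this case.

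In the remaining case, $p \in \Tr_{w_{N,0}}(\xi)$, so Lemma~\ref{lem:fundamental-p-1} applied with $n = N$, $i = 0$ provides indices $0 \leq m_1 < m_2 < N(k+1)$ and $0 \leq j < k$ such that
\[
w_{N(k+1),0} \cdot \xi = w_{m_1,j} \cdot p = w_{m_2,j} \cdot p.
\]
From $w_{m,j} = h^m \cdot (\tau g_1 \cdots \tau g_j)$ we read off $w_{m_2,j} = h^{m_2 - m_1} \cdot w_{m_1,j}$, so the displayed chain rearranges to the statement that $h^{m_2 - m_1}$ fixes $w_{m_1,j} \cdot p = w_{N(k+1),0} \cdot \xi = h^{N(k+1)} \cdot \xi$. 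Since $h$ is a permutation, fixing $h^{N(k+1)} \cdot \xi$ is equivalent to fixing $\xi$, so $m = m_2 - m_1$ satisfies $1 \leq m < N(k+1)$ and $w_{m,0} \cdot \xi = \xi$.

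The substantive work has already been discharged by Lemmas~\ref{lem:fundamental-xi}, \ref{lem:fundamental-p-0}, and \ref{lem:fundamental-p-1}; what remains for the present statement is essentially a bookkeeping step plus the observation that any power of the bijection $h$ which fixes $h^{N(k+1)} \cdot \xi$ automatically fixes $\xi$ itself. I do not anticipate any genuine obstacle beyond making sure the quantitative bound $m \leq N(k+1)$ is tracked correctly through the two cases.
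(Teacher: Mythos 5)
Your proof is correct and follows essentially the same two-case strategy as the paper: handle $p \notin \Tr_{w_{N,0}}(\xi)$ by observing that all $\tau$-letters act trivially (so $w_{N,0}$ reduces to $g^N$), and handle $p \in \Tr_{w_{N,0}}(\xi)$ by invoking Lemma~\ref{lem:fundamental-p-1} and cancelling $h^{N(k+1)}$. You just make a few steps more explicit than the paper does — notably spelling out that $h^{m_2-m_1}$ fixing $h^{N(k+1)}\cdot\xi$ is equivalent to it fixing $\xi$, and the (harmless but unnecessary) detour through Lemma~\ref{lem:fundamental-xi} in the first case where the paper only needs $n = N$.
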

\begin{proof}
Suppose first that $p$ is not contained in $\Tr_{w_{N,0}}(\xi)$.
Then $p$ is not contained in $\Tr_{v_{N,0}}(\xi)$ and we obtain
\[
w_{N,0} \cdot \xi
= (g_1 \ldots g_k)^N \cdot \xi
= \xi.
\]
Suppose next that $p$ is contained in $\Tr_{w_{N,0}}(\xi)$.
From Lemma~\ref{lem:fundamental-p-1} we know that there are natural numbers $m_1,m_2,j$ with $0 \leq m_1 < m_2 < N(k+1)$ and $0 \leq j < k$ such that
\[
w_{N(k+1),0} \cdot \xi
= w_{m_1,j} \cdot p
= w_{m_2,j} \cdot p.
\]
In view of this, we see that
$w_{m_2-m_1,0} \cdot \xi = \xi$, where $m_2 - m_1 \leq (k+1)N$.
\end{proof}


\section{Embedding torsion groups}

In this section we will apply Lemma~\ref{lem:fundamental-general} in the case where the involved groups are finitely generated and residually finite.
This will enable us to prove Theorem~\ref{thm:main-intro} from the introduction.

\subsection{The finitely generated case}


Let $G$, $\Gamma$, and $\Omega^{+}$ be as above.
Suppose now that $G$ is finitely generated and let $X$ be a finite generating set of $G$.
In this case we can define the \emph{torsion growth function of $G$ with respect to $X$} as the function
\[
T_G^X \colon \N \rightarrow \N,\ \ell \mapsto \max \Set{\ord_G(g)}{g \in B_G^X(\ell)},
\]
where $B_G^X(\ell)$ denotes the set of elements of $G$ whose word length with respect to $X$ is bounded above by $\ell$.
We consider the generating set $X^{+} \defeq \alpha(X) \cup \{\tau\}$ of $\Gamma$.

\begin{lemma}\label{lem:fundamental-orbit}
Let $\ell \in \N$, let $\gamma \in B_{\Gamma}^{X^{+}}(\ell)$, and let $\xi \in \Omega^{+}$.
The size of the orbit $\langle \gamma \rangle \cdot \xi$ is bounded above by $T_G^{X}(\ell) \cdot (\ell+1)$.
\end{lemma}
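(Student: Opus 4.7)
The plan is to reduce the statement directly to Lemma~\ref{lem:fundamental-general} by choosing a short word for $\gamma$ and conjugating so that it takes the form $w_{1,0}$ for a suitable sequence of $g_i$'s.

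First I would express $\gamma$ as a reduced word of length $\ell' \leq \ell$ in the generators $X^{+} = \alpha(X) \cup \{\tau\}$, let $k$ be the number of occurrences of $\tau$ in it, and group consecutive letters from $\alpha(X)$ to obtain a factorisation
\[
\gamma = h_0 \tau h_1 \tau h_2 \cdots \tau h_k,
\]
where each $h_i$ lies in $G$ (or rather in $\alpha(G)$) and is represented by a word in $X$ of some length $\ell_i \geq 0$ with $\sum_{i=0}^{k} \ell_i = \ell' - k$. If $k = 0$, then $\gamma = h_0 \in G$ has word length at most $\ell$ with respect to $X$, so $\ord_G(h_0) \leq T_G^{X}(\ell)$ and the orbit $\langle \gamma \rangle \cdot \xi$ has size at most $T_G^{X}(\ell) \leq T_G^{X}(\ell) \cdot (\ell+1)$.

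In the main case $k \geq 1$, I would pass to the conjugate
\[
\gamma' \defeq h_0^{-1} \gamma h_0 = \tau h_1 \tau \cdots \tau h_{k-1} \tau (h_k h_0)
\]
and set $g_i \defeq h_i$ for $1 \leq i \leq k-1$ and $g_k \defeq h_k h_0$. Then $\gamma' = w_{1,0}$ in the notation of Section~2, and the product $g_1 \cdots g_k = h_1 \cdots h_{k-1} h_k h_0$ has word length at most $\ell' - k \leq \ell$ in $X$. Since $G$ is a torsion group, this gives $N \defeq \ord_G(g_1 \cdots g_k) \leq T_G^{X}(\ell)$. Applying Lemma~\ref{lem:fundamental-general} to the point $\xi' \defeq h_0^{-1} \cdot \xi$ then yields some $m \leq N(k+1)$ with $(\gamma')^m \cdot \xi' = \xi'$, and conjugating back produces $\gamma^m \cdot \xi = \xi$. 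Using $k \leq \ell' \leq \ell$, we conclude
\[
\lvert \langle \gamma \rangle \cdot \xi \rvert \leq m \leq N(k+1) \leq T_G^{X}(\ell) \cdot (\ell+1).
\]

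There is no real obstacle here; the only points that need care are the conjugation trick (which is essentially forced because Lemma~\ref{lem:fundamental-general} applies to words that begin with $\tau$) and the bookkeeping that the $\tau$-count $k$ and the total $X$-length of the $h_i$'s jointly consume the budget $\ell$, so that both factors in the bound $N(k+1)$ can simultaneously be estimated by the quantities appearing in the statement.
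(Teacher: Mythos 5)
Your proof is correct and follows essentially the same route as the paper's: dispatch the case $\gamma \in \alpha(G)$, then conjugate a short word for $\gamma$ into the form $\tau g_1 \tau \cdots \tau g_k$, and invoke Lemma~\ref{lem:fundamental-general} with the observations that $\abs{g_1\cdots g_k}_X \leq \ell$ (so $N \leq T_G^X(\ell)$) and $k \leq \ell$. Your write-up is a touch more explicit than the paper's, in particular making the conjugation of the base point $\xi \mapsto h_0^{-1}\cdot\xi$ and the identification with $w_{1,0}$ precise, but the argument is the same.
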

\begin{proof}
Since the claim is trivial otherwise, we may assume that $\gamma$ does not lie in $B^{\alpha(X)}_{\alpha(G)}(\ell)$.
Thus, up to conjugation,
we may assume that $\gamma$ is represented by a word of the form
\[
w = \tau g_1 \tau \ldots \tau g_r,
\]
where $\sum_{i=1}^r \abs{g_i}_{\alpha(X)} \leq \ell$ and therefore $\abs{g_1 \ldots g_r}_X \leq \ell$.
In this case we know from Lemma~\ref{lem:fundamental-general} that there is a natural number
\[
m \leq T_G(\ell)(r+1) \leq T_G(\ell)(\ell+1)
\]
such that $\gamma^m(\xi) = \xi$. 
\end{proof}

Note that Lemma~\ref{lem:fundamental-orbit} has the following immediate consequence.

\begin{corollary}\label{cor:fundamental-torsion}
Every element $\gamma \in \Gamma$ satisfies
\[
\gamma^{(T_G^{X}(\abs{\gamma}_Y) \cdot (\abs{\gamma}_Y+1))!} = 1,
\]
where $\abs{\gamma}_Y$ denotes the word length of $\gamma$ with respect to $Y$.
In particular, $\Gamma$ is a torsion group and $T_{\Gamma}^{Y}$ is bounded above by the function $n \mapsto (T_G^{X}(n) \cdot (n+1))!$.
\end{corollary}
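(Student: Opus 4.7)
The plan is to derive the corollary directly from Lemma~\ref{lem:fundamental-orbit} by bounding the cycle structure of $\gamma$ on $\Omega^{+}$. Fix $\gamma \in \Gamma$, set $\ell \defeq \abs{\gamma}_{X^{+}}$, and write $M \defeq T_G^{X}(\ell) \cdot (\ell+1)$ for the quantity the lemma provides. For every $\xi \in \Omega^{+}$ the orbit $\langle \gamma \rangle \cdot \xi$ has cardinality at most $M$, and this cardinality is exactly the length of the cycle of $\gamma$ that contains $\xi$.

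Next I would argue that $\gamma^{M!}$ acts trivially on $\Omega^{+}$. Each cycle length $d$ of $\gamma$ satisfies $d \leq M$, hence $d \mid M!$, and $\gamma^{d}$ already fixes every element of that cycle. Consequently $\gamma^{M!}$ fixes every point of $\Omega^{+}$. Because $\Gamma$ is realized as a subgroup of $\Sym(\Omega^{+})$, a permutation that fixes all of $\Omega^{+}$ must be the identity, so $\gamma^{M!} = 1$, which is the claimed identity.

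The two assertions of the \emph{in particular} clause are then immediate: the existence of such a power shows that every $\gamma \in \Gamma$ has finite order, so $\Gamma$ is a torsion group, and the divisibility $\ord_{\Gamma}(\gamma) \mid M!$ translates into $T_{\Gamma}^{X^{+}}(n) \leq (T_G^{X}(n) \cdot (n+1))!$.

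There is no real obstacle here; the corollary is a bookkeeping consequence of Lemma~\ref{lem:fundamental-orbit}. The only point requiring mild care is the passage from a pointwise orbit-size bound to a single global exponent for $\gamma$: one exploits that every integer $d \leq M$ divides $M!$, which turns the uniform bound on cycle lengths into a common exponent without having to compute an actual least common multiple over the (potentially infinitely many) cycles of $\gamma$ on $\Omega^{+}$.
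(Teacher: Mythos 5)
Your proof is correct and coincides with the paper's intended argument: the paper states the corollary as an immediate consequence of Lemma~\ref{lem:fundamental-orbit} without further elaboration, and your write-up supplies exactly the expected bookkeeping (uniform bound on cycle lengths, every $d \leq M$ divides $M!$, faithfulness of the $\Sym(\Omega^{+})$-representation). The only cosmetic point is that the corollary's $Y$ is the generating set $X^{+}$ from Lemma~\ref{lem:fundamental-orbit}, which you correctly use.
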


%

\subsection{Families of actions}

The crucial point of Corollary~\ref{cor:fundamental-torsion} is that the function
\[
n \mapsto (T_G^{X}(n) \cdot (n+1))!
\]
does neither depend on the action of $\Gamma$ on $\Omega^{+}$ nor on the choice of the point $p \in \Omega$.
This allows us to apply Corollary~\ref{cor:fundamental-torsion} simultaneously on a family of $G$-actions.
To do so, we consider a family $(\Omega_i)_{i \in I}$ of $G$-sets $\Omega_i$.
Let $\alpha_i \colon G \rightarrow \Sym(\Omega_i)$ denote the homomorphism corresponding to the action of $G$ on $\Omega_i$.
For each $i \in I$ we fix an element $p_i \in \Omega_i$ and let $(q_i)_{i \in \Omega}$ be a family of pairwise different elements that do not lie in $\cup_{i \in \N} \Omega_i$.
Let $\Omega_i^{+} \defeq \Omega_i \cup \{q_i\}$ and let $\tau_i = (p_i,q_i) \in \Sym(\Omega_i^{+})$.
We consider the homomorphism
\[
\alpha_I \colon G \rightarrow \prod \limits_{i \in I} \Sym(\Omega_i^{+}),\ g \rightarrow (\alpha_i(g))_{i \in I}
\]
and the sequence $\tau_I \defeq (\tau_i)_{i \in I} \in \prod_{i \in I} \Sym(\Omega_i^{+})$.
Let $\Gamma_I$ denote the subgroup of $\prod \limits_{i \in I} \Sym(\Omega_i^{+})$ that is generated by $\alpha_I(G)$ and $\tau_I$ and let $X_{I} \defeq \alpha_I(X) \cup \{\tau_I\}$, which is a finite generating set of $\Gamma_I$.

\begin{proposition}\label{prop:uniform-bound-orbit}
The torsion function $T_{\Gamma_I}^{X_I}$ of $\Gamma_I$ with respect to $X_I$ satisfies
\[
T_{\Gamma_I}^{X_I}(n) \leq (T_G^{X}(n) \cdot (n+1))!
\]
for every $n \in \N$.
In particular, $\Gamma_I$ is a torsion group.
\end{proposition}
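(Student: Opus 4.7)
The plan is to reduce the proposition to Corollary~\ref{cor:fundamental-torsion} applied coordinatewise. Given any $\gamma \in \Gamma_I$ with $\abs{\gamma}_{X_I} \leq n$, I would write $\gamma = (\gamma_i)_{i \in I}$ via the embedding $\Gamma_I \subseteq \prod_{i \in I}\Sym(\Omega_i^{+})$. For each $i \in I$, consider the subgroup $\Gamma_i \defeq \langle \alpha_i(G),\tau_i\rangle$ of $\Sym(\Omega_i^{+})$ with generating set $X_i^{+} \defeq \alpha_i(X) \cup \{\tau_i\}$. Since the coordinate projection $\prod_{j \in I}\Sym(\Omega_j^{+}) \twoheadrightarrow \Sym(\Omega_i^{+})$ sends $X_I$ onto $X_i^{+}$ letterwise, any word of length $\leq n$ over $X_I$ representing $\gamma$ projects to a word of length $\leq n$ over $X_i^{+}$ representing $\gamma_i$. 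Consequently $\gamma_i \in B_{\Gamma_i}^{X_i^{+}}(n)$ for every $i$.

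Next I would invoke Corollary~\ref{cor:fundamental-torsion} separately for each $i$: the triple $(G, \Omega_i^{+}, p_i)$ satisfies exactly the setup of Section~2, so the corollary applies and yields
\[
\gamma_i^{(T_G^{X}(n)\cdot (n+1))!} = 1.
\]
The essential point, already emphasised in the text preceding the proposition, is that the exponent $(T_G^{X}(n)\cdot(n+1))!$ is independent of $i$: it involves only $G$, $X$, and $n$, not the particular $G$-set $\Omega_i$ nor the chosen point $p_i$. This uniformity is exactly what the argument hinges on.

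Assembling the identities across coordinates gives $\gamma^{(T_G^{X}(n)\cdot(n+1))!} = (\gamma_i^{(T_G^{X}(n)\cdot(n+1))!})_{i \in I} = 1$ in $\prod_{i \in I}\Sym(\Omega_i^{+})$. Hence $\ord_{\Gamma_I}(\gamma) \leq (T_G^{X}(n)\cdot(n+1))!$, and taking the maximum over $\gamma \in B_{\Gamma_I}^{X_I}(n)$ yields the claimed bound on $T_{\Gamma_I}^{X_I}(n)$. Since every element of $\Gamma_I$ has some finite word length with respect to the finite generating set $X_I$, the bound forces every element of $\Gamma_I$ to have finite order, which gives the ``in particular'' clause.

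I do not anticipate a real obstacle here: all of the work has been done in Section~2, and the proof is a short bookkeeping argument combining the uniform bound from Corollary~\ref{cor:fundamental-torsion} with the fact that coordinate projections do not increase word length. The only point that needs to be stated carefully is the uniformity of the exponent in the family parameter $i$, which is precisely why Corollary~\ref{cor:fundamental-torsion} was formulated in an action-independent fashion.
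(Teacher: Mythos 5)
Your proof is correct and follows the same route as the paper: project coordinatewise to each $\Sym(\Omega_i^{+})$, note that the projection does not increase word length over $X_I \to X_i^{+}$, invoke Corollary~\ref{cor:fundamental-torsion} for each coordinate, and use that the resulting exponent $(T_G^{X}(n)\cdot(n+1))!$ is independent of $i$. The paper states this in one line (``applying Corollary~\ref{cor:fundamental-torsion} simultaneously to the actions of $\Gamma_I$ on $\Omega_i^{+}$''); you have simply unpacked the same bookkeeping.
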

\begin{proof}
The claim directly follows by applying Corollary~\ref{cor:fundamental-torsion} simultaneously to the actions of $\Gamma_I$ on $\Omega_i^{+}$, which are given by the canonical homomorphisms $\Gamma_I \rightarrow \Sym(\Omega_i^{+})$ for every $i \in I$.
\end{proof}

\subsection{The residually finite case}

Let us now assume that $G$ is an infinite finitely generated residually finite torsion group.
In this case we can choose a properly decreasing chain $(N_i)_{i \in \N}$ of finite index normal subgroups of $G$ that satisfies $\cap_{i \in \N} N_i = 1$.
Let $\Omega_i \defeq G / N_i$ and let $\alpha_i \colon G \rightarrow \Sym(\Omega_i)$ denote the action of $G$ that is given by left translation.
Then, using the assumption that $(N_i)_{i \in \N}$ is properly decreasing, we see that the homomorphism
\[
\alpha_{\geq n} \colon G \rightarrow \prod \limits_{i \geq n}^{\infty} \Sym(\Omega_i),\ g \rightarrow (\alpha_i(g))_{i \geq n}
\]
is injective for every $n \in \N$.
As before, we fix an element $p_i \in \Omega_i$ for each $i \in \N$ and a family $(q_i)_{i \in \N}$ of pairwise different elements that do not lie in $\cup_{i \in \N} \Omega_i$.
We write $\Omega_i^{+} \defeq \Omega_i \cup \{q_i\}$ and consider the elements $\tau_i = (p_i,q_i) \in \Sym(\Omega_i^{+})$ and $\tau \defeq (\tau_i)_{i \in \N} \in \prod_{i=1}^{\infty} \Sym(\Omega_i^{+})$.
Let $\Gamma \leq \prod \limits_{i \geq n}^{\infty} \Sym(\Omega_i^{+})$ denote the subgroup that is generated by $\alpha_{\geq 1}(G)$ and $\tau$.

\begin{lemma}\label{lem:extending-to-symmetric}
Let $Y$ be a finite set and let $H \leq \Sym(Y)$ be a subgroup that acts transitively on $Y$.
Let $Y^{+} = Y \cup \{z\}$, where $z \notin Y$.
For every $y \in Y$ the group $\Sym(Y^{+})$ is generated by $G$ and the transposition $(y,z)$.
\end{lemma}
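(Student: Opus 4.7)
The plan is to exhibit enough transpositions inside the subgroup $H' \defeq \langle H, (y,z) \rangle$ to reach all of $\Sym(Y^+)$, using the transitivity of $H$ on $Y$ as the only non-trivial ingredient. The key point to note at the outset is that $H \leq \Sym(Y)$, viewed as a subgroup of $\Sym(Y^+)$ in the natural way, fixes the new point $z$.

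First I would observe that, for any $h \in H$, conjugation of $(y,z)$ by $h$ gives
\[
h \cdot (y,z) \cdot h^{-1} = (h(y), h(z)) = (h(y), z),
\]
where the second equality uses that $h$ fixes $z$. Since $H$ acts transitively on $Y$, the orbit of $y$ under $H$ is all of $Y$, and therefore $H'$ contains the full set of transpositions
\[
\Set{(y', z)}{y' \in Y}.
\]

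Next I would appeal to the classical fact that the transpositions with a common coordinate generate the symmetric group on the whole set. Concretely, for any two distinct elements $y_1, y_2 \in Y$, one has
\[
(y_1, y_2) = (y_1, z)(y_2, z)(y_1, z),
\]
so $H'$ contains every transposition of $\Sym(Y^+)$. Since $\Sym(Y^+)$ is generated by its transpositions, this forces $H' = \Sym(Y^+)$.

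There is no real obstacle here; the statement is essentially an elementary observation. The only thing to be careful about is the tacit identification of $H \leq \Sym(Y)$ with a subgroup of $\Sym(Y^+)$ fixing $z$, which is what allows the conjugation calculation above to produce transpositions of the desired form rather than arbitrary permutations mixing $Y$ and $\{z\}$.
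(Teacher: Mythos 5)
Your proof is correct and follows essentially the same route as the paper: conjugate $(y,z)$ by elements of $H$ (using that $H$ fixes $z$ and acts transitively on $Y$) to obtain all transpositions of the form $(x,z)$, then conjugate those among themselves to get all transpositions, and conclude via the standard generation fact. The only difference is cosmetic: you spell out the conjugation identities a bit more explicitly.
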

\begin{proof}
Let $H$ denote the subgroup of $\Sym(Y^{+})$ that is generated by $G$ and $(y,z)$.
Since $G$ acts transitively on $Y$ it follows that every transposition of the form $(x,z)$ with $x \in Y$ is a conjugate of $(y,z)$ in $H$ and therefore lies in $H$.
By conjugating such a transposition $(x,z)$ with a transposition $(x',z)$, where $x' \notin \{x,z\}$, we obtain $(x,z)^{(x',z)} = (x,x') \in H$.
Now the proof follows from the well-known fact that $\Sym(Y^{+})$ is generated by all transpositions in $\Sym(Y^{+})$.
\end{proof}


Recall that a subgroup $H$ of a product of groups $P = \prod_{i \in I} K_i$ is called \emph{subdirect} if the canonical map $H \rightarrow K_i$ is surjective for every $i \in I$.

\begin{corollary}\label{cor:extending-to-symmetric}
The subgroup $\Gamma$ of $\prod_{i=1}^{\infty} \Sym(\Omega_i^{+})$ is subdirect.
\end{corollary}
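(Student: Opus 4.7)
The plan is to fix an arbitrary index $i \in \N$ and verify that the canonical projection
\[
\pi_i \colon \prod_{j=1}^{\infty} \Sym(\Omega_j^{+}) \to \Sym(\Omega_i^{+})
\]
restricts to a surjection on $\Gamma$. Since $\Gamma$ is by definition generated by $\alpha_{\geq 1}(G)$ together with $\tau = (\tau_j)_{j \in \N}$, its image $\pi_i(\Gamma)$ is generated by $\alpha_i(G)$ and $\tau_i = (p_i, q_i)$. So the whole task reduces to showing that this pair of generating sets already generates $\Sym(\Omega_i^+)$.

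For that I would invoke Lemma~\ref{lem:extending-to-symmetric} coordinatewise. The only hypothesis to check is transitivity of the subgroup $\alpha_i(G) \leq \Sym(\Omega_i)$, but this is automatic: $\alpha_i$ is the left-translation action of $G$ on the coset space $\Omega_i = G/N_i$, which is transitive for any subgroup $N_i \leq G$. Applying the lemma with $Y = \Omega_i$, $z = q_i$, $y = p_i$, and $H = \alpha_i(G)$ therefore yields that $\Sym(\Omega_i^{+})$ is generated by $\alpha_i(G) \cup \{\tau_i\}$. Combined with the preceding paragraph, this gives $\pi_i(\Gamma) = \Sym(\Omega_i^{+})$, and since $i$ was arbitrary, $\Gamma$ is subdirect.

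There is no real obstacle here; the corollary is essentially just Lemma~\ref{lem:extending-to-symmetric} applied to each coordinate $i$ in turn, and the only nontrivial hypothesis of that lemma (transitivity of $H$) comes for free because the $G$-sets $\Omega_i$ were constructed as regular coset spaces.
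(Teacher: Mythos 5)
Your proposal is correct and follows exactly the argument the paper has in mind: the paper's proof simply cites Lemma~\ref{lem:extending-to-symmetric} together with the definition of $\Gamma$, and you have spelled out the coordinatewise projection and the (automatic) transitivity of the left-translation action on $\Omega_i = G/N_i$ that makes the lemma applicable.
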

\begin{proof}
This is a direct consequence of Lemma~\ref{lem:extending-to-symmetric} and the definition of $\Gamma$.
\end{proof}



\begin{lemma}\label{lem:finite-index-subgroup}
Let $(n_i)_{i \in \N}$ be a sequence of pairwise different natural numbers, let $P \defeq \prod_{i=1}^{\infty} \Sym(n_i)$, and let $H \leq P$ be a finitely generated subdirect subgroup.
Let $\iota \colon H \rightarrow P$ denote the inclusion map.
For each $k \in \N$ let $\pr_{\geq k} \colon P \rightarrow \prod_{i=k}^{\infty} \Sym(n_i)$ denote the canonical projection.
There is a natural number $m$ such that the image of the group $K \defeq H \cap \prod_{i=1}^{\infty} \Alt(n_i)$ under $\pr_{\geq m} \circ \iota$ is a subdirect subgroup of $\prod_{i=m}^{\infty} \Alt(n_i)$.
\end{lemma}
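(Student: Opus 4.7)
The plan is to exploit the fact that $K$ is the kernel of the coordinatewise sign map, and then combine this with the classification of normal subgroups of symmetric groups.

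First I would consider the homomorphism $\sigma \colon H \to \prod_{i=1}^{\infty} \Z/2\Z$ obtained by composing $\iota$ with the product of the sign maps $\Sym(n_i) \to \Z/2\Z$. By construction, the kernel of $\sigma$ is precisely $K$. Since $\prod_{i=1}^{\infty} \Z/2\Z$ has exponent $2$, it is an $\F_2$-vector space, so every finitely generated subgroup of it is finite. As $H$ is finitely generated, the image $\sigma(H)$ is therefore a finite elementary abelian $2$-group, say of order $2^r$, and hence $[H : K] = 2^r$.

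Next, since $(n_i)_{i \in \N}$ consists of pairwise distinct natural numbers, all but finitely many of the $n_i$ satisfy both $n_i \geq 5$ and $n_i! > 2^r$. Choose $m \in \N$ so that both inequalities hold for every $i \geq m$. Fix such an $i$ and let $L_i \leq \Sym(n_i)$ denote the image of $K$ under the projection to the $i$-th factor. Because $K$ is normal in $H$ (as a kernel) and $H$ surjects onto $\Sym(n_i)$ by subdirectness, the subgroup $L_i$ is normal in $\Sym(n_i)$, and $L_i \leq \Alt(n_i)$ by the very definition of $K$.

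Now the classification of normal subgroups of $\Sym(n_i)$ for $n_i \geq 5$ gives that the only possibilities are $\{1\}$, $\Alt(n_i)$, and $\Sym(n_i)$, so $L_i \in \{1, \Alt(n_i)\}$. If $L_i = 1$, then the surjection $H \to \Sym(n_i)$ factors through $H/K$, forcing $n_i! \leq [H : K] = 2^r$, which contradicts the choice of $m$. Therefore $L_i = \Alt(n_i)$ for every $i \geq m$, and this is precisely the statement that the image of $K$ under $\pr_{\geq m} \circ \iota$ is subdirect in $\prod_{i \geq m} \Alt(n_i)$.

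The main potential obstacle is the finiteness of $\sigma(H)$, but this is immediate once one observes that finitely generated subgroups of $\F_2$-vector spaces are automatically finite; the rest of the argument is essentially a bookkeeping step using the simplicity of $\Alt(n_i)$ for large $n_i$.
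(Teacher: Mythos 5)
Your proof is correct, and the core of the argument is the same as in the paper: $K$ is the kernel of the coordinatewise sign (equivalently abelianization) map, so finite generation of $H$ forces $[H:K]<\infty$, and then one shows $K$ still surjects onto $\Alt(n_i)$ for all large $i$. The only place you diverge is the last step. The paper argues that since $[H:K]=k$ is finite, the image of $K$ in $\Sym(n_i)$ has index at most $k$, and then invokes the fact that $\Alt(n)$ has no proper subgroup of index $\leq k$ once $n$ is large (the minimal degree of $\Alt(n)$ is $n$). You instead observe that $K\trianglelefteq H$, so the image $L_i$ of $K$ in $\Sym(n_i)$ is a normal subgroup contained in $\Alt(n_i)$, and then use the classification of normal subgroups of $\Sym(n)$ for $n\geq 5$ to conclude $L_i\in\{1,\Alt(n_i)\}$, ruling out $L_i=1$ by the index bound $n_i!\leq[H:K]$. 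Both closing steps are standard and equally short; your normality-based route is a clean alternative, arguably a bit more self-contained since it avoids quoting the minimal-degree fact, at the mild cost of invoking the normal-subgroup structure of $\Sym(n)$.
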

\begin{proof}
Let $\pi \colon \prod_{i=1}^{\infty} \Sym(n_i) \rightarrow \prod_{i=1}^{\infty} \Sym(n_i)^{\ab} \cong \prod_{i=1}^{\infty} \F_2$ denote the abelianization.
Note that $K$ is the kernel of $\pi \circ \iota$.
Since $H$ is finitely generated, its image in $\prod_{i=1}^{\infty} \F_2$ is finite and thus $K$ has finite index, say $k$, in $H$.
Since there are only finitely many alternating groups that admit proper subgroups of index at most $k$, it follows that the canonical map $K \rightarrow \Alt(n_i)$ is surjective for almost every $i$.
Thus the lemma follows if $m$ is chosen big enough.
\end{proof}

We are now ready to prove the main result.

\begin{theorem}\label{thm:main}
Every finitely generated residually finite torsion group $G$ embeds into a torsion group that is residually in the class $\mathcal{FS}$.
\end{theorem}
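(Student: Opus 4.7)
The plan is to take the group $\Gamma$ built by the construction of this subsection and verify that it is the desired torsion residually $\mathcal{FS}$ group containing $G$. Using residual finiteness, first choose a properly decreasing chain $(N_i)_{i \in \N}$ of finite index normal subgroups of $G$ with $\bigcap_i N_i = \{1\}$, and carry out the construction to form the data $\Omega_i, \Omega_i^+, \tau_i, \tau, \alpha_{\geq 1}, \Gamma$. Then $\alpha_{\geq 1}$ is injective by the trivial-intersection assumption, so $G \hookrightarrow \Gamma$; by Proposition~\ref{prop:uniform-bound-orbit}, the group $\Gamma$ is torsion; and by Corollary~\ref{cor:extending-to-symmetric}, $\Gamma$ is a subdirect subgroup of $\prod_i \Sym(\Omega_i^+)$. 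Since $(N_i)$ is properly decreasing, the sizes $n_i := |\Omega_i^+|$ are strictly increasing and in particular pairwise distinct.

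Applying Lemma~\ref{lem:finite-index-subgroup} yields an $m \in \N$ such that the finite-index normal subgroup $K := \Gamma \cap \prod_i \Alt(n_i)$ satisfies $\pr_i(K) = \Alt(n_i)$ for every $i \geq m$. Enlarging $m$ if necessary, we may assume $n_m \geq 5$, so that each $\Alt(n_i)$ with $i \geq m$ is finite simple.

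It remains to show that $\Gamma$ is residually $\mathcal{FS}$. Given $\gamma \in \Gamma \setminus \{1\}$, we would produce a normal subgroup $N \triangleleft \Gamma$ with $\gamma \notin N$ and $\Gamma/N \in \mathcal{FS}$. If $\gamma \notin K$, then its image in the finitely generated elementary abelian $2$-group $\Gamma/K \leq \prod_i \F_2$ is non-trivial, and a suitable $\F_2$-linear functional on this finite quotient yields a surjection $\Gamma \to \Z/2$ with $\gamma$ outside the kernel; since $\Z/2 \in \mathcal{FS}$, this handles the first case. If $\gamma \in K \setminus \{1\}$, then by the subdirect structure of $K$ in $\prod_{i \geq m} \Alt(n_i)$, one finds some $i \geq m$ with $\pr_i(\gamma) \neq 1$, and the aim is to exploit the simple alternating quotient $\Alt(n_i)$ of $K$ to construct a simple quotient of $\Gamma$ detecting $\gamma$.

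The main obstacle lies in this second case. The natural candidate normal subgroup $L_i := K \cap \ker(\pr_i|_\Gamma)$ is normal in $\Gamma$, being the intersection of two $\Gamma$-normal subgroups, and $K/L_i \cong \Alt(n_i)$ is simple. However, $\Gamma/L_i$ is an extension of the finite $2$-group $\Gamma/K$ by $\Alt(n_i)$ and is not itself simple, so $\Alt(n_i)$ is not directly a quotient of $\Gamma$ via this route. One must either combine this data with the $\F_2$-quotients coming from the sign homomorphisms, or exploit the concrete subdirect structure of $\Gamma$, to extract a genuine finite simple quotient of $\Gamma$ that detects $\gamma$. Overcoming this parity-type obstruction, caused by the transposition $\tau$ being an odd permutation in each coordinate, is the crux of the proof.
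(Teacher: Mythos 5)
Your proposal runs in parallel with the paper's proof right up to the application of Lemma~\ref{lem:finite-index-subgroup}, and your diagnosis of the obstacle is accurate: $\Gamma$ has a non-trivial elementary abelian $2$-quotient $\Gamma/K$, and for $\gamma \in K$ the coordinate quotient $\Gamma/\ker(\pr_i|_\Gamma) \cong \Sym(n_i)$ has only $\Z/2$ as a finite simple quotient, so $\gamma$ is never detected in a simple quotient obtained from a single coordinate. But the proposal stops there: it states the crux without resolving it, so it is not a complete proof. More importantly, the strategy of showing that \emph{$\Gamma$ itself} is residually $\mathcal{FS}$ is not the route the paper takes, and it is unclear whether it can be made to work.

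The paper's resolution involves two moves that are absent from your argument. First, before running the construction, it invokes \cite[Theorem~1.1]{KionkeScheslerRealizingSubgroups} to embed $G$ into a finitely generated residually finite \emph{perfect} torsion group, so one may assume $G$ is perfect. Second, the target group is not $\Gamma$ at all, but the image $\bar{K}$ of $K = \Gamma \cap \prod_i \Alt(n_i)$ under the projection $\pr_{\geq m}$. Because $G$ is perfect, $G$ maps trivially to $\prod_i \F_2$, hence $G \leq K$; and since $(N_i)$ is properly decreasing, $\pr_{\geq m}$ is injective on $G$, so $G$ embeds in $\bar{K}$. Now $\bar{K}$ is a finitely generated subdirect subgroup of $\prod_{i \geq m} \Alt(n_i)$ (with $m$ large enough that $n_m \geq 5$), hence residually $\mathcal{FS}$ by projecting onto coordinates, and it is torsion as a quotient of the subgroup $K$ of the torsion group $\Gamma$. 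This sidesteps the parity obstruction entirely: one never needs $\Gamma$ to be residually $\mathcal{FS}$, only the subdirect image of $K$. Without the reduction to the perfect case, there is no reason for $G$ to lie in $K$ and the argument cannot even get started.
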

\begin{proof}
It is shown in~\cite[Theorem 1.1]{KionkeScheslerRealizingSubgroups} that every finitely generated, residually finite torsion group embeds into a finitely generated, residually finite perfect torsion group.
Regarding this, we can assume that $G$ is perfect.
Let $(N_i)_{i \in \N}$ be a strictly decreasing sequence of finite index normal subgroups of $G$ with $\cap_{i \in \N} N_i = 1$ and let $\Omega_i = G / N_i$.
From Corollary~\ref{cor:extending-to-symmetric} we know that $G$ embeds in a finitely generated subdirect subgroup $\Gamma$ of $\prod_{i=1}^{\infty} \Sym(n_i)$, where $n_i = \abs{\Omega_i} + 1$.
In this case Lemma~\ref{lem:finite-index-subgroup} provides us with a number $m \in \N$ such that the projection image of $K \defeq \Gamma \cap \prod_{i=1}^{\infty} \Alt(n_i)$ in $\prod_{i=m}^{\infty} \Sym(n_i)$ is a subdirect subgroup of $\prod_{i=m}^{\infty} \Alt(n_i)$.
Since $G$ is perfect we have $G \leq K$.
Moreover the restriction of the projection $\prod_{i=1}^{\infty} \Sym(n_i) \rightarrow \prod_{i=m}^{\infty} \Sym(n_i)$ to $G$ is injective since the sequence $(N_i)_{i \in \N}$ was chosen to be decreasing.
Thus $G$ embeds into the image of $K$ in $\prod_{i=m}^{\infty} \Alt(n_i)$, which is a finitely generated subdirect torsion subgroup, which completes the proof.
\end{proof}

\bibliographystyle{amsplain}
\bibliography{literatur}

\end{document}